\renewcommand{\phi}{\varphi}
\newcommand{\findim}{\mathrm{findim}\,}
\newcommand{\Findim}{\mathrm{Findim}\,}
\newcommand{\dell}{\mathrm{dell}\,}
\newcommand{\phidim}{\varphi\dim}
\renewcommand{\dell}{\mathrm{dell}\,}
\newcommand{\rad}{\mathrm{rad}\,}
\newcommand{\add}{\mathrm{add}\,}
\newcommand{\pd}{\mathrm{pd}\,}
\newcommand{\gldim}{\mathrm{gldim}\,}
\newcommand{\id}{\mathrm{id}\,}
\newcommand{\op}{\mathrm{op}}
\renewcommand{\add}{\mathrm{add}\,}
\newcommand{\Z}{\mathbb{Z}}
\newcommand{\N}{\mathbb{N}}
\renewcommand{\mod}{\mathrm{mod}\,}
\newcommand{\Mod}{\mathrm{Mod}\,}
\newcommand{\smod}{\underline{\mathrm{mod}}\,}
\newcommand{\dirsum}{\xhookrightarrow{\!\oplus}}
\newcommand{\into}{\xhookrightarrow{}}
\newcommand{\onto}{\twoheadrightarrow}
\newcommand{\agemo}{\raisebox{\depth}{\scalebox{1}[-1]{\(\Omega\)}}}
\newcommand{\Tr}{\mathrm{Tr}\,}
\newcommand{\grade}{\mathrm{grade}\,}
\renewcommand{\top}{\mathrm{top}\,}
\newcommand{\depth}{\mathrm{depth}\,}
\newcommand{\Ext}{\mathrm{Ext}}
\newcommand{\Hom}{\mathrm{Hom}}
\newcommand{\kdell}{k\text{-}\mathrm{dell}\,}
\newcommand{\ddell}{\mathrm{ddell}\,}
\newcommand{\kddell}{k\text{-}\mathrm{ddell}\,}
\theoremstyle{plain}
\newtheorem{theorem}{Theorem}[section]
\newtheorem{definition}[theorem]{Definition}
\newtheorem{lemma}[theorem]{Lemma}
\newtheorem{proposition}[theorem]{Proposition}
\newtheorem{corollary}[theorem]{Corollary}
\newtheorem{remark}[theorem]{Remark}
\newtheorem{example}[theorem]{Example}
\newtheorem{question}[theorem]{Question}
\newtheorem{construction}[theorem]{Construction}
\newtheorem{observation}[theorem]{Observation}
\title{Symmetry of Derived Delooping Level}
\author{Ruoyu Guo$^*$}
\thanks{$^*$Department of Mathematics, Brandeis University, Waltham, MA, USA. \texttt{rguo@brandeis.edu}}
\keywords{finitistic dimension conjecture, delooping level, derived delooping level, syzygy, trivial extension}
\subjclass[2020]{16G10, 16E05}  
\begin{document}

\begin{abstract}
The finitistic dimension conjecture is closely connected to the symmetry of the finitistic dimension. Recent work indicates that such connection extends to one of its upper bounds, the delooping level. In this paper, we show that the same holds for the derived delooping level, which is an improvement of the delooping level. This reduces the finitistic dimension conjecture to considering algebras whose opposite algebra has (derived) delooping level zero. We thereby demonstrate ways to utilize the new concept of derived delooping level to obtain new results and present additional work involving tensor product of algebras. 
\end{abstract}

\maketitle

\section{Introduction}

The finitistic dimension conjecture, which states that the little finitistic dimension (findim) of an Artin algebra $\Lambda$ is always finite, carries significant homological implications. It is a sufficient condition for numerous other conjectures for Artin algebras, the most important of which includes the Wakamatsu tilting conjecture \cite{mantese2004wakamatsu}, the Gorenstein symmetry conjecture (a consequence of the Wakamatsu tilting conjecture), the Auslander-Reiten conjecture \cite{auslander1975generalized, happel1991homological}, and the Nakayama conjecture (a consequence of the Auslander-Reiten conjecture). Investigations of the findim conjecture go hand in hand with a better understanding of the representation theory of Artin algebras. Since the conjecture specifically asks about the projective dimension of $\Lambda$-modules, it is proved in special cases where the module category or the syzygy category is well understood.

In addition to solving the findim conjecture through a thorough understanding of the module category $\mod\Lambda$, other techniques rely on various upper and lower bounds of findim. There is too much work done on the subject to be comprehensive, so we only mention some invariants we are most interested in. One such lower bound is called the depth, defined as the supremum of $\grade S$ over all simple $\Lambda$-modules $S$, where
\[
\grade S = \inf\{n\in\N \mid \Ext_{\Lambda}^n(S, \Lambda) \neq 0\}.
\]

These definitions come from the study of stable module category \cite{auslander1969stable}, and when $\Lambda$ is a commutative Noetherian local ring, the Auslander-Buchsbaum formula \cite{auslander1957homological} implies $\depth\Lambda = \findim\Lambda$. Popular upper bounds include the $\varphi$-dimension $\phidim\Lambda$ \cite{igusa2005finitistic} and the delooping level $\dell\Lambda$ \cite{gelinas2022}. The author and Igusa \cite{guo2025derived} recently improved the delooping level to the derived delooping level $\ddell\!$. Precisely, these upper bounds satisfy
\[
\findim\Lambda \leq \phidim\Lambda,
\]
\[
\Findim\Lambda^{\op} \leq \ddell\Lambda \leq \dell\Lambda.
\]

One natural question related to these upper bounds is whether they are always finite, and a positive answer to that question would solve the findim conjecture for Artin algebras. For the rest of the paper, when we refer to the little findim conjecture, the big findim conjecture, the $\varphi$-dimension conjecture, the delooping level conjecture, and the derived delooping level conjecture, we mean their corresponding invariants $\findim\Lambda$, $\Findim\Lambda$, $\phidim\Lambda$, $\dell\Lambda$, $\ddell\Lambda$ are finite for all Artin algebras $\Lambda$, respectively. The $\phi$-dimension conjecture is false by the { counterexamples in \cite{barrios2024algebras, hanson2022counterexample}}, but the delooping level is zero in those cases. The delooping level conjecture is false by the counterexample in \cite{kershaw2023finite}. However, the derived delooping level is shown to be finite and equal to the big finitistic dimension of the opposite algebra in that case (Example 3.8 in \cite{guo2025derived}), providing evidence that this new concept deserves future attention.

Another interesting aspect of the findim conjecture involves its symmetry. Let $\Lambda$ be an Artin algebra. Cummings \cite[Theorem A]{cummings2024left} proves the equivalence between the big finitistic dimension conjecture and the statement that $\Findim\Lambda<\infty$ implies $\Findim\Lambda^{\op}<\infty$ for all $\Lambda$. She also proves the stronger result \cite[Theorem B]{cummings2024left} that if $\Findim\Lambda=\infty$, then there is a related algebra $\tilde{\Lambda}$ (using Construction \ref{con: multi-point extension} later) such that $\Findim\tilde{\Lambda}=\infty$ and $\Findim\tilde{\Lambda}^{\op}=0$. The analogous result for the delooping level is the equivalence between the delooping level conjecture and the statement that $\dell\Lambda<\infty$ implies $\dell\Lambda^{\op}<\infty$ for all $\Lambda$ and is proved in \cite{zhang2022left}. Due to the counterexample in \cite{kershaw2023finite}, we know that the delooping level conjecture does not hold. However, there is no known example where $\ddell\Lambda=\infty$. In this paper, we prove the corresponding symmetry statement for the derived delooping level, thus providing another sufficient condition for the findim conjecture.

\begin{theorem}
\label{thm:main theorem in intro}
The derived delooping level conjecture  holds if and only if $\ddell\Lambda=0$ implies $\ddell\Lambda^{\op}<\infty$ for all Artin algebras $\Lambda$.
\end{theorem}

In our formulation, algebras satisfying $\Findim\Lambda=0$ have many useful properties such as every embedding from one projective module to another splits. These properties may make the findim conjecture easier to work with, compared to the case where we consider all $\Lambda$ with $\ddell\Lambda<\infty$.

\textbf{Acknowledgements.} The author is grateful to his advisor Kiyoshi Igusa for his continuous support and helpful conversations regarding the paper. The author thanks Emre Sen for mentioning the papers \cite{cummings2024left, zhang2022left} during the 2024 Maurice Auslander Distinguished Lectures and International Conference. { The author is grateful to the anonymous referee for pointing out mistakes in Lemma \ref{lem:dell A vs dell lambda} and Proposition \ref{prop:ddell A vs ddell lambda} and for providing very helpful and constructive comments to improve the first version of the paper.}

\section{Preliminaries}

We start with the necessary definitions and notations for the rest of the paper. Let $\Lambda$ be an Artin algebra. For the sake of using quiver path algebras { and their quotients} as examples, we may also think of $\Lambda$ as a basic finite dimensional algebra. We do not lose much generality this way. Let $\mod\Lambda$ be the category of finitely generated \textbf{right} $\Lambda$-modules so that $\mod\Lambda^{\op}$ is the category of finitely generated left $\Lambda$-modules. Similarly, let $\Mod\Lambda$ and $\Mod\Lambda^{\op}$ be the category of all right and left $\Lambda$-modules, respectively. If we use the word module without specifying left or right, we always mean \textbf{right} module. We use $\prescript{}{A}{M}$ (resp. $M_A$) to mean $M$ is a left (resp. right) $A$-module. For every module $M$, we can define the \textbf{syzygy} $\Omega M$ of $M$ (resp. cosyzygy $\Sigma M$ of $M$) as the kernel of the surjection from the projective cover $P(M)\onto M$ (resp. cokernel of the injection $M\into I(M)$ into the injective envelope). The finitistic dimension conjecture states that the little finitistic dimension $\findim\Lambda$ and the big finitistic dimension $\Findim\Lambda$ are finite for all $\Lambda$, where
\[
\findim\Lambda = \sup \{\pd M\mid \pd M<\infty, M\in\mod\Lambda \},
\]
\[
\Findim\Lambda = \sup \{\pd M\mid \pd M<\infty, M\in\Mod\Lambda \},
\]
\[
\findim\Lambda^{\op} \,(\text{resp. } \Findim\Lambda^{\op}) = \sup \{\id M\mid \id M<\infty, M\in\mod\Lambda \,(\text{resp. } M\in\Mod\Lambda)\}
\]
\[
\pd M = \inf\{n\in\N\mid \Omega^n M \text{ is projective}\},
\]
\[
\id M = \inf\{n\in\N\mid \Sigma^n M \text{ is injective}\}.
\]

The two upper bounds that we investigate are the delooping level and derived delooping level. Let $M\dirsum N$ mean { $M$ is a \textbf{direct summand} of $N$. We say $M$ is a \textbf{stable retract} of $N$ if there is a split monomorphism $M\to N\oplus P$ for some projective module $P$, that is, $M\dirsum N\oplus P$.
}
Let $\agemo=\Tr\Omega\Tr$ be an endofunctor on the \textbf{stable module category} $\smod\Lambda$. It is known that $(\agemo, \Omega)$ is an adjoint pair on $\smod\Lambda$. Let $\N=\Z_{\geq 0}$ and use the convention that the infimum of the empty set is $+\infty$. We recall the following definitions.

\begin{definition}\cite{gelinas2022, guo2025derived}
\label{def: dell, kdell, ddell}
Let $M\in\smod\Lambda$ and $k$ be a positive integer.
\begin{enumerate}
{
\item The \textbf{delooping level of $M$} is
\begin{align*}
\dell M & = \inf\{n\in\N \mid \Omega^n M \text{ is a stable retract of } \Omega^{n+1}\agemo^{n+1}\Omega^n M\} \\
& = \inf\{n\in\N \mid \Omega^n M \text{ is a stable retract of } \Omega^{n+1}N \text{ for some $N$}\}.
\end{align*}
\item The \textbf{$k$-delooping level} of $M$ is
\begin{align*}
\kdell M & = \inf\{n\in\N \mid \Omega^n M \text{ is a stable retract of } \Omega^{n+k}\agemo^{n+k}\Omega^n M\} \\
& = \inf\{n\in\N \mid \Omega^n M \text{ is a stable retract of } \Omega^{n+k}N \text{ for some $N$}\}.
\end{align*}
}
In the special case that $\kdell M=0$ for all $k\in\Z_{>0}$, we say $M$ is \textbf{infinitely deloopable}.

\item The \textbf{derived delooping level} of $M$ is
\begin{align*}
\ddell M = \inf \{m\in\N \mid & \,\exists n\leq m \text{ and an exact sequence in $\mod\Lambda$ of the form} \\
& \,\, 0 \to C_n \to C_{n-1} \to \cdots \to C_1 \to C_0 \to M \to 0, \\
& \text{ where $(i+1)$-$\dell C_i\leq m-i$, } i=0,1,\dots,n  \},
\end{align*}
where we say \textbf{$\ddell M$ is equal to $m$ using $n$ and the exact sequence} $0\to C_n\to \cdots \to C_0\to M\to 0$.
\end{enumerate}
\end{definition}

\textbf{Notation.} In cases where it is helpful to point out the algebra we are working with, we write the algebra as a subscript of the invariant. For example, $\dell\!_A M$ means the delooping level of $M$ considered in $\mod A$.

Note that the equivalence between $\Omega^nM$ being a stable retract of $\Omega^{n+1}N$ for some $N$ and $\Omega^n M$ being a stable retract of $\Omega^{n+1}\agemo^{n+1}\Omega^n M$ in the definition of the delooping level is proved in \cite[Theorem 1.10]{gelinas2022}. The author and Igusa \cite{guo2025derived} extend the definition to $k$-delooping level. It is clear that $(\agemo^k, \Omega^k)$ is still an adjoint pair on $\smod\Lambda$, so the equivalence holds for general $k\in\Z_{>0}$, as seen in the second item of Definition \ref{def: dell, kdell, ddell}. The derived delooping level presented here is the special case of the more general $k$-derived delooping level $\kddell\!$ in \cite[Definition 2.22]{guo2025derived} when $k=1$. Note that the set of modules with finite derived delooping level $\ddell\Lambda=1$-$\ddell\Lambda$ is a torsion-free class in $\mod\Lambda$ \cite{guo2025derived}, a property that the set of modules with finite $\kddell\Lambda$ when $k>1$ does not necessarily have. By definition, we also have $k_1$-$\ddell\Lambda\leq k_2$-$\ddell\Lambda$ if $k_1 < k_2$, so it is often convenient to only consider the upper bound $\kddell\Lambda$ when $k=1$, as is the case here in this paper.

For the three invariants above that are defined on modules, we can define them on the algebra $\Lambda$ as the supremum of the invariant over all simple $\Lambda$-modules. For example,
\[
\ddell\Lambda = \sup\{\ddell S\mid S \text{ is simple $\Lambda$-module}\}.
\]

These invariants are all upper bounds of the big finitistic dimension of the opposite algebra.

\begin{theorem}\cite{gelinas2022, guo2025derived}
\label{thm:dell, ddell bounds Findim}
For Artin algebras $\Lambda$,
\begin{equation}
\label{eq:dell, ddell bounds Findim}
\Findim\Lambda^{\op} \leq \ddell\Lambda \leq \dell\Lambda \leq \kdell\Lambda.
\end{equation}
\end{theorem}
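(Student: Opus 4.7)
The plan is to establish the three inequalities in \eqref{eq:dell, ddell bounds Findim} separately; the last two are essentially formal, while the first carries the substantive content. For $\dell\Lambda\leq\kdell\Lambda$, if a simple module $S$ satisfies $\kdell S=n$, pick $N$ with $\Omega^n S\dirsum\Omega^{n+k}N$ and rewrite $\Omega^{n+k}N=\Omega^{n+1}(\Omega^{k-1}N)$ to witness $\dell S\leq n$. For $\ddell\Lambda\leq\dell\Lambda$, given a simple $S$ with $\dell S=n$, use the trivial one-term exact sequence $0\to S\to S\to 0$ (i.e.\ $C_0=S$) in Definition~\ref{def: dell, kdell, ddell}(3); the condition $1\text{-}\dell C_0=\dell S\leq n$ is met with $m=n$, giving $\ddell S\leq n$. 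Taking supremum over simples delivers both inequalities on $\Lambda$.

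The real work is $\Findim\Lambda^{\op}\leq\ddell\Lambda$, which I would prove by contradiction. Set $m=\ddell\Lambda$, take $M\in\Mod\Lambda$ with $\id M=d<\infty$, and suppose $d>m$. Pick a simple right module $S$ with $\Ext^d_\Lambda(S,M)\neq 0$ (such $S$ exists by the standard characterization of injective dimension). By $\ddell S\leq m$, choose $n\leq m$ and an exact sequence $0\to C_n\to\cdots\to C_0\to S\to 0$ with $\ell_i:=(i+1)\text{-}\dell C_i\leq m-i$, which unpacks as $\Omega^{\ell_i}C_i\dirsum\Omega^{\ell_i+i+1}N_i$ for some $N_i$. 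The hypothesis $d>m$ forces $d-i-\ell_i\geq d-m\geq 1$, which lets me iterate the stable-category identity $\Ext^j(\Omega Z,-)\cong\Ext^{j+1}(Z,-)$ (valid for $j\geq 1$) to obtain
\[
\Ext^{d-i}(C_i,M)\cong\Ext^{d-i-\ell_i}(\Omega^{\ell_i}C_i,M),
\]
a direct summand of $\Ext^{d-i-\ell_i}(\Omega^{\ell_i+i+1}N_i,M)\cong\Ext^{d+1}(N_i,M)=0$, the final vanishing by $\id M=d$. Hence $\Ext^{d-i}(C_i,M)=0$ for $i=0,\ldots,n$. Breaking the resolution into short exact sequences $0\to K_i\to C_i\to K_{i-1}\to 0$ (with the convention $K_{-1}=S$) and chasing the associated long exact sequences of $\Ext(-,M)$ inductively forces $\Ext^d(S,M)=0$, contradicting the choice of $S$.

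The main obstacle is the index bookkeeping: the cohomological shift $d-i$, the $\ell_i$ syzygies stripped on the left, and the $\ell_i+i+1$ syzygies put in on the right must combine precisely into $\Ext^{d+1}$, the first degree where $\id M=d$ produces vanishing. This balance is exactly what dictates the shift $(i+1)$-$\dell$ on $C_i$ and the bound $\ell_i\leq m-i$ in Definition~\ref{def: dell, kdell, ddell}(3): together they provide just enough slack for the above computation to close up. The argument is essentially G\'elinas' proof of $\Findim\Lambda^{\op}\leq\dell\Lambda$ generalized to allow a short resolution of $S$, as carried out in \cite{guo2023derived}.
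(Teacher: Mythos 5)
Your proposal is correct: the two formal inequalities are handled exactly as one should ($\Omega^{n+k}N=\Omega^{n+1}(\Omega^{k-1}N)$ for $\dell\leq\kdell$, and the $n=0$ sequence with $C_0=S$ for $\ddell\leq\dell$), and the contradiction argument for $\Findim\Lambda^{\op}\leq\ddell\Lambda$ — detecting $\id M=d$ on a simple $S$, using $d>m$ to keep all Ext degrees $\geq 1$ so the syzygy dimension shifts and stable direct summands apply, landing in $\Ext^{d+1}(N_i,M)=0$, and then chasing the spliced short exact sequences — is sound, including the index bookkeeping. Note that the paper itself gives no proof of this theorem (it is quoted from G\'elinas and Guo--Igusa), and your argument is essentially the one in those references, as you acknowledge.
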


In many cases, the upper bounds $\dell\Lambda$ and $\ddell\Lambda$ are equal to $\Findim\Lambda^{\op}$, such as the radical square zero case \cite{gelinas2021finitistic}. While $\dell\Lambda = \Findim\Lambda^{\op}$ is not true in general (in fact, arbitrarily different as in \cite{kershaw2023finite}), { there is no conclusive example} where $\Findim\Lambda^{\op} = \ddell\Lambda$ does not hold. So, it is interesting to ask to what extent the equalities $\Findim\Lambda^{\op} = \ddell\Lambda$ and $\Findim\Lambda^{\op} = \dell\Lambda$ can hold.

In the special case where $\gldim\Lambda<\infty$, both equalities hold. On the one hand, we must have $\Findim\Lambda=\gldim\Lambda$ and they are upper bounded by $\kdell\Lambda^{\op}$ and $\ddell\Lambda^{\op}$. On the other hand, if $M$ has finite projective dimension, $\kdell M\leq \pd M$ and $\ddell M\leq \pd M$ by definition. We immediately have
\begin{equation}
\label{eq:kddell is gldim when finite}
\gldim\Lambda^{\op} \geq \kdell\Lambda^{\op} \geq \ddell\Lambda^{\op} \geq \gldim\Lambda \geq \kddell\Lambda \geq \ddell\Lambda \geq \gldim\Lambda^{\op}.
\end{equation}

Therefore, for all $k\in\Z_{>0}$, $\kdell\Lambda$ and $\ddell\Lambda$ describe $\gldim\Lambda$ and $\gldim\Lambda^{\op}$ exactly if $\gldim\Lambda$ or $\gldim\Lambda^{\op}$ is finite. This is stated as the following observation.
\begin{observation}
\label{prop:kddell is gldim when finite}
If $\gldim\Lambda<\infty$,
\[
\gldim\Lambda^{\op} = \kdell\Lambda^{\op} = \ddell\Lambda^{\op} = \gldim\Lambda = \kdell\Lambda = \ddell\Lambda
\]
for all $k\in\Z_{>0}$.
\end{observation}

In order to prove the main theorem of the paper, we recall the construction first given in \cite{cummings2024left} for finite dimensional algebras, which is then generalized to any ring in \cite{krause2022symmetry}. Cummings \cite{cummings2024left} used the construction to prove { the big finitistic dimension conjecture} is equivalent to the statement that $\Findim\Lambda<\infty$ implies $\Findim\Lambda^{\op}<\infty$.

\begin{construction}
\label{con: multi-point extension}
For any Artin algebra $A$, let $\bar{S} = \top A = A/\rad A$ and $B = T(\bar{S})$, where $T(\bar{S})$ is the trivial extension. As an abelian group, $T(\bar{S})$ is $\bar{S}\oplus \leftidx{_{\bar{S}}}{\bar{S}}{_{\bar{S}}}$, and it also follows the multiplication rule
\[
(a_1,b_1)\cdot(a_2,b_2) = (a_1a_2, a_1b_2+b_1a_2).
\]

Consider the triangular matrix algebra $\tilde{A}$ associated to an Artin algebra $A$
\begin{equation}
\label{eq:A tilde}
\tilde{A} = \begin{pmatrix}
A & 0 \\ \leftidx{_B}{\bar{S}}{_A} & B
\end{pmatrix}.
\end{equation}

Let $\mathbb{K}$ be a field. Note that if $A=\mathbb{K}Q/I$ is the quotient of a path algebra of a quiver $Q$ with relations $I$, then the quiver of $\tilde{A}$ has twice as many vertices as $Q$, and specifically attaches $\begin{tikzcd} \tilde{i} \ar[loop left, "\beta"] \ar[r,"\alpha"] & i \end{tikzcd}$ to each vertex $i$ of $Q$. It also adds the relations $\beta^2 = \beta\alpha=\alpha\cdot\rad \mathbb{K}Q = 0$.
\end{construction}

The idea to attach $\begin{tikzcd} \tilde{i} \ar[loop left, "\beta"] \ar[r,"\alpha"] & i \end{tikzcd}$ or a loop to some vertices of a quiver is not new. It showed up in \cite[Example 2.2]{jensen1982homological} as an algebra $\Lambda$ with $\findim\Lambda=1$ and $\findim\Lambda^{\op}=0$. That example is a specialization of Example \ref{ex:findim and op arbitrarily different} when $n=2$. Example \ref{ex:findim and op arbitrarily different} first appeared in \cite[Example 1.2]{green1991finitistic} as one way to construct a monomial algebra whose left and right finitistic dimensions are arbitrarily different. We present the same example below.

\begin{example}
\label{ex:findim and op arbitrarily different}
Let $Q$ be the following quiver with $n+1$ vertices.

\begin{center}
\begin{tikzcd}
1' \arrow[out=150, in=210, loop] \arrow[r] & 1 \arrow[r] & 2 \arrow[r] & \cdots \arrow[r] & n \\
\end{tikzcd}.
\end{center}

Let $\Lambda=\mathbb{K}Q/\rad\!^2\mathbb{K}Q$. It is clear that $\gldim\Lambda=\infty$, but we find that $\Findim\Lambda=\findim\Lambda=n$ is achieved by the projective resolution
\begingroup
\setlength\arraycolsep{1pt}
\[
0\to S_n \to \begin{matrix} n-1 \\ n \end{matrix} \to \cdots \to \begin{matrix} 1 \\ 2 \end{matrix} \to \begin{matrix} {} & 1' & {} \\ 1' & {} & 1 \end{matrix} \to \begin{matrix} 1' \\ 1' \end{matrix} \to 0
\]
\endgroup

On the other hand, the indecomposable injective $\Lambda$-modules are
\begingroup
\setlength\arraycolsep{1pt}
\[
\begin{matrix} 1' \\ 1' \end{matrix} \quad \begin{matrix} 1' \\ 1 \end{matrix} \quad \begin{matrix} 1 \\ 2 \end{matrix} \quad \dots \quad \begin{matrix} n-1 \\ n \end{matrix},
\]
and they have the same Lowey length 2. So there cannot be any surjection from an injective module to another that does not split. That is, $\findim\Lambda^{\op}=\Findim\Lambda^{\op}=0$.
\endgroup
\end{example}

The phenomenon in Example \ref{ex:findim and op arbitrarily different} lays the intuition for why we are able to prove this symmetry condition for the finitistic dimension, the delooping level, and the derived delooping level. 
By Construction \ref{con: multi-point extension}, to find the lower triangular matrix algebra $\widetilde{A^{\op}}$, we need to find the associated bottom left and right entries of the matrix as in \eqref{eq:A tilde}. Let $A$, $\bar{S}$, $B$ be as in Construction \ref{con: multi-point extension}. The bottom left entry $\bar{S}=\top A^{\op}$ of $\widetilde{A^{\op}}$ is thought of as a right $A^{\op}$-module, and the bottom right entry $T(\bar{S})$ as an abelian group is still $\bar{S}\oplus \leftidx{_{\bar{S}}}{\bar{S}}{_{\bar{S}}}$, but its multiplication is inherited from $A^{\op}$. Therefore, the bottom right entry is $T(\bar{S})^{\op}=B^{\op}$. This gives us
\[
\widetilde{A^{\op}} = \begin{pmatrix}
A^{\op} & 0 \\ \prescript{}{B^{\op}}{\bar{S}_{A^{\op}}} & B^{\op}
\end{pmatrix}.
\]
Defining $\Lambda^{\op} = \widetilde{A^{\op}}$, we finally get
\[
\Lambda = (\widetilde{A^{\op}})^{\op} = \begin{pmatrix}
A & \leftidx{_A}{\bar{S}}{_B} \\ 0 & B
\end{pmatrix} \cong
\begin{pmatrix}
B & 0 \\ \leftidx{_A}{\bar{S}}{_B} & A.
\end{pmatrix}
\]

We would like to know how the construction of $\Lambda$ from $A$ implies about the relationship between $\ddell A$ and $\ddell\Lambda$. First of all, it is easy to see that $\dell\Lambda^{\op}=\ddell\Lambda^{\op}=0$ since every simple module embeds in a projective module, which is also shown in \cite{cummings2024left} and \cite{krause2022symmetry}.

In \cite{Auslander_Reiten_Smalo_1995}, the authors provide the representation theory of such triangular matrix algebras. In particular, all left $\Lambda^{\op}$-modules are of the form $(\prescript{}{A^{\op}}{M}, \prescript{}{B^{\op}}{N}, f)$, where $f:\prescript{}{B^{\op}}{\bar{S}_{A^{\op}}} \otimes_{A^{\op}} M \to \prescript{}{B^{\op}}{N}$ is a morphism of left $B^{\op}$-modules. Therefore, all right $\Lambda$-modules are of the form $(M_A,N_B,f)$, where $f:M\otimes_A \bar{S} \to N$ is a morphism of right $B$-modules. This is how we will write (right) $\Lambda$-modules in the next section.

Morphisms from $(M_1,N_1,f_1)$ to $(M_2,N_2,f_2)$ in $\mod\Lambda$ are of the form $(\alpha,\beta)$, where $\alpha:M_1\to M_2$ and $\beta:N_1\to N_2$, such that the following diagram commutes:

\begin{center}
\begin{tikzcd}
M_1\otimes_A \bar{S} \arrow[r, "\alpha\otimes 1_{\bar{S}}"] \arrow[d, "f_1"] & M_2\otimes_A \bar{S} \arrow[d, "f_2"] \\
N_1 \arrow[r, "\beta"] & N_2
\end{tikzcd}.
\end{center}
{ Indecomposable projective $\Lambda$-modules are of the form $(P, P\otimes_A \bar{S}, 1_{P\otimes_A \bar{S}})$ and $(0,Q,0)$, where $P,Q$ are indecomposable projective $A$-module and $B$-module, respectively. Note that $P\otimes_A \bar{S}$ is nonzero for any projective $A$-module $P$ since $\bar{S}=\top A$. Indecomposable injective $\Lambda$-modules are of the form $(I,0,0)$ and $(\Hom_B(\bar{S}, J), J, \phi)$, where $I,J$ are injective $A$-module and $B$-module, respectively, and $\phi:\Hom_B(\bar{S}, J)\otimes_A \bar{S} \to J$ is the evaluation map $\phi(g\otimes x) = g(x)$ for $g\in \Hom_B(\bar{S}, J)$ and $x\in \bar{S}$.} Further properties of the finitistic dimensions and representation theory of triangular matrix algebras are investigated in \cite{Auslander_Reiten_Smalo_1995, fossum2006trivial}.

\section{Proof of main theorem}

Our goal is to prove Theorem \ref{thm:main theorem in intro} that investigates the symmetry of the derived delooping level. The two module categories $\mod A$ and $\mod B$ naturally embed in $\mod\Lambda$ in the following sense. Objects of the full subcategory identified with $\mod A$ (resp. $\mod B$) are of the form $(M_A,0,0)$ (resp. $(0, N_B, 0)$).

Two useful facts from \cite[Lemma 2.7]{zhang2022left} are stated in the next lemma. We rephrase the lemma slightly to accommodate for our setting.

\begin{lemma}\cite{zhang2022left}
\label{lem:SES in mod lambda and A}
Let $A$, $\bar{S}$, and $B$ be as in Construction \ref{con: multi-point extension}. Let $\Lambda=(\widetilde{A^{\op}})^{\op}$ be as above.
\begin{enumerate}
\item If 
\[
0\to (M',N',f') \to (C, D, g) \to (M, N, f) \to 0
\]
is a short exact sequence in $\mod\Lambda$, then
\[
0 \to M' \to C \to M \to 0
\]
is a short exact sequence in $\mod A$.
\item If $P$ is the projective cover of $M$ in $\mod A$, then we have an exact sequence of $\Lambda$-modules
\[
0\to (\Omega M,0,0) \oplus (0,Z,0) \to (P, P\otimes_A \bar{S}, 1) \oplus (0,Q,0) \to (M, N, f) \to 0,
\]
where $Q$ is some projective $B$-module and $Z\in\add(B\oplus \top B)$.
\end{enumerate}
\end{lemma}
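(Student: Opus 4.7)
The plan is to work throughout with the explicit triple-form description of $\mod\Lambda$ recalled in the preliminaries, using that short exact sequences of triples are characterised by componentwise exactness of their $A$- and $B$-parts. Part (1) is then immediate: the projection $(M, N, f) \mapsto M$ is exact, and applying it to the given sequence produces the desired short exact sequence in $\mod A$.

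For part (2), I would begin with the projective cover $\pi : P \twoheadrightarrow M$ in $\mod A$ and form the natural $\Lambda$-morphism
\[
(P,\, P \otimes_A \bar{S},\, 1) \xrightarrow{(\pi,\, f \circ (\pi \otimes 1_{\bar{S}}))} (M, N, f).
\]
This is surjective on the $A$-component, but on the $B$-component the image is only $f(\top M) \subseteq N$. I would remedy this by choosing a projective $B$-module $Q$ together with a surjection onto $N / f(\top M)$ (for instance, the projective cover) and lifting to a map $\rho : Q \to N$, then taking the direct sum with $(0, Q, 0)$. The combined map is surjective on both components, hence surjective in $\mod\Lambda$.

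Next I would compute the kernel $K$ of this surjection. Componentwise, the $A$-part of $K$ is $\ker \pi = \Omega M$ and the $B$-part is some $Z$. The key to the asserted splitting $K = (\Omega M, 0, 0) \oplus (0, Z, 0)$ is that the structure map of $K$ must vanish: since $\pi$ is a projective cover, $\Omega M \subseteq \rad P$, and so the map $\Omega M \otimes_A \bar{S} \to P \otimes_A \bar{S}$ from which $K$'s structure map is inherited is already zero. With a zero structure map the triple splits as claimed.

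The main obstacle I expect is verifying $Z \in \add(B \oplus \top B)$. My plan is to exploit the specific shape of $B = T(\bar{S})$: assuming $A$ basic (as the paper does), $\bar{S} = A/\rad A$ is a product of division rings, so $B$ decomposes as a product of local Nakayama self-injective algebras of Loewy length two. The only indecomposable $B$-modules are therefore the simples and the indecomposable projectives, so every $B$-module — in particular $Z$ — lies in $\add(B \oplus \top B)$.
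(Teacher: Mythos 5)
Your proof is correct, and it is also worth noting that the paper itself does not prove this lemma: it cites it directly from Zhang's paper \cite{zhang2022left} (Lemma 2.7 there), so there is no in-paper argument to compare against. Your argument is the natural one for triangular matrix algebras and fills that gap correctly.

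A few small confirmations of the points you flagged as delicate. For part (1), the functor $(M,N,f)\mapsto M$ is restriction along the idempotent $e_1=\left(\begin{smallmatrix}1&0\\0&0\end{smallmatrix}\right)$, which is exact; that is exactly what you use. For part (2), since $\bar S=A/\rad A$ we have $M\otimes_A\bar S\cong M/M\rad A=\top M$, so the image of the $B$-component of your first map is indeed $f(\top M)$, and adding the projective $B$-summand $(0,Q,0)$ with a lift $\rho:Q\to N$ of $Q\twoheadrightarrow N/f(\top M)$ makes the map surjective. Because $\Omega M=\ker\pi\subseteq\rad P$, the induced map $\Omega M/\Omega M\rad A\to P/P\rad A$ is zero, so the structure map of the kernel triple vanishes and the kernel decouples as $(\Omega M,0,0)\oplus(0,Z,0)$. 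Finally, $B=T(\bar S)$ is (up to Morita equivalence if $A$ is not basic) a product of copies of $D[\epsilon]/(\epsilon^2)$ for division rings $D$, a self-injective Nakayama algebra of Loewy length $2$, so indeed every $B$-module lies in $\add(B\oplus\top B)$; in particular $Z$ does. The proof stands.
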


The authors in \cite{zhang2022left} use the previous lemma to prove $\dell A\leq \dell\Lambda$, but in fact, { the stronger relation $\dell A\leq\dell\Lambda\leq \dell A + 1$ holds} in that case. { We show some more general statements in the following lemma.}

\begin{lemma}
\label{lem:dell A vs dell lambda}
Let $A$, $\bar{S}$, and $B$ be as in Construction \ref{con: multi-point extension}. Let $\Lambda=(\widetilde{A^{\op}})^{\op}$ be as above. Let $M\in\mod A$, $N\in\mod B$, and $f:M\otimes_A \bar{S}\to N$ be a $B$-morphism so that $(M, N, f)$ is a $\Lambda$-module. Then for all $k\in\Z_{>0}$, we have
{
\begin{enumerate}
\item $\kdell\!_A M \leq \kdell\!_{\Lambda} (M, N, f) \leq \kdell\!_A M + 1$.
\item $\kdell A \leq \kdell\Lambda \leq \kdell A + 1$.
\end{enumerate}
}
\end{lemma}

\begin{proof}
\begin{enumerate}
\item By (2) of Lemma \ref{lem:SES in mod lambda and A}, we get that $\Omega_{\Lambda} (M,N,f) = (\Omega_A M, 0, 0)\oplus (0,Z,0)$. Since $B$ is isomorphic to the direct sum of copies of $\mathbb{K}[X]/(X^2)$, every { finitely generated} $B$-module is the direct sum of copies of $\mathbb{K}$ and $\mathbb{K}[X]/(X^2)$. As $B$-modules, $\mathbb{K}$ is infinitely deloopable as its own syzygy, and $\mathbb{K}[X]/(X^2)$ is projective. Thus, every $\Lambda$-module of the form $(0,Z,0)$ is infinitely deloopable. For higher syzygies, we see
\[
\Omega_{\Lambda}^i (M,N,f) = (\Omega_A^i M,0,0) \oplus (0,Z',0),
\]
where $(0,Z',0)$ is infinitely deloopable { and therefore has $k$-delooping level 0 for all $k\in\Z_{>0}$.

We first show $\kdell\!_A M \leq \kdell\!_{\Lambda} (M, N, f)$. Suppose $\kdell\!_{\Lambda} (M,N,f)=m$ is finite. Then there exists $(M',N',f')\in\mod\Lambda$ such that
\[
\Omega_{\Lambda}^m (M,N,f) = (\Omega_A^m M, 0, 0) \oplus (0,Z,0) \dirsum \Omega_{\Lambda}^{m+k}(M',N',f') \oplus U = (\Omega_A^{m+k} M', 0, 0) \oplus (0,Z',0) \oplus U,
\]
where $U$ is some projective $\Lambda$-module and both $(0,Z,0)$ and $(0,Z',0)$ are infinitely deloopable. By the definition of morphisms in $\mod\Lambda$, there is no nonzero map from $(\Omega_A^m M, 0, 0)$ to $(0,Z',0)$. Also, the $\Lambda$-module $(\Omega_A^m M, 0, 0)$ does not have projective summands, so $(\Omega_A^m M, 0, 0)$ must be a direct summand of $(\Omega_A^{m+k} M',0,0)$. In other words, $\Omega_A^m M$ is a direct summand of $\Omega_A^{m+k} M'$ in $\mod A$, showing $\kdell\!_A M\leq m$.

Now we prove $\kdell\!_{\Lambda} (M, N, f) \leq \kdell\!_A M + 1$. Suppose $\kdell\!_A M = m$ is finite. Then there exists $M'\in\mod A$ such that $\Omega_A^m M$ is a stable retract of $\Omega_A^{m+k} M'$. In particular, non-projective summands of $\Omega_A^m M$ are also summands of $\Omega_A^{m+k} M'$, and projective summands of $\Omega_A^m M$ are not necessarily summands of $\Omega_A^{m+k} M'$. We know $\Omega_{\Lambda}^m (M,N,f) = (\Omega_A^m M, 0, 0) \oplus (0,Z,0)$, where $(0,Z,0)$ is infinitely deloopable. If $\Omega_A^m M$ has no projective summand, then $(\Omega_A^m M,0,0)$ is a direct summand of $\Omega_{\Lambda}^{m+k}(M',0,0)=(\Omega_A^{m+k} M', 0,0)$, implying $\kdell\!_{\Lambda}(M,N,f)\leq m$. However, if $\Omega_A^m M$ has projective summands, $(\Omega_A^m M,0,0)$ has a summand of the form $(P,0,0)$ where $P$ is a projective $A$-module. Since $P$ may not be a summand of $\Omega_A^{m+k} M'$, $(P,0,0)$ is not necessarily a stable retract of $(\Omega_A^{m+k} M',0,0)$. Moreover, $(P,0,0)$ does not map to $(0,Z',0)$ nontrivially for any $Z'\in\mod B$ and is not a projective $\Lambda$-module. So, $\kdell\!_{\Lambda} (M, N, f)$ may not be equal to $m$.

We solve this problem by taking another syzygy. Since the non-projective summands of $\Omega_A^m M$ are summands of $\Omega_A^{m+k} M'$, $\Omega_A^{m+1} M$ is a direct summand of $\Omega_A^{m+k+1} M'$. This shows that the only possibly non-infinitely deloopable summand $(\Omega_A^{m+1}M,0,0)$ of $\Omega_{\Lambda}^{m+1}(M,N,f)$ is a summand of $(\Omega_A^{m+k+1}M',0,0)\dirsum \Omega_{\Lambda}^{m+k+1}(M',0,0)$. Therefore, $\kdell\!_{\Lambda} (M, N, f)\leq m+1$.
}
\item From the previous part, we know $\kdell\!_{\Lambda} (0,S',0)=0$ for every simple $B$-module $S'$, so we only need to consider simple $\Lambda$-modules of the form $(S,0,0)$ for every simple $A$-module $S$. Then it follows immediately from the previous part that
{
\[
\sup\{\kdell\!_A S\mid S_A \text{ is simple}\} \leq \sup\{\kdell\!_{\Lambda} (S, 0, 0) \mid S_A \text{ is simple}\} \leq \sup\{\kdell\!_A S\mid S_A \text{ is simple}\} + 1,
\]
which implies $\kdell A \leq \kdell\Lambda \leq \kdell A + 1$.
}
\end{enumerate}
\end{proof}

Now we can show that transforming from $A$ to $\Lambda$ { only changes the derived delooping level by at most 1.}

\begin{proposition}
\label{prop:ddell A vs ddell lambda}
Let $A$ be an Artin algebra and $\Lambda=(\widetilde{A^{\op}})^{\op}$ be as above. { Then $\ddell A \leq \ddell \Lambda \leq \ddell A + 1$.}
\end{proposition}

\begin{proof}
{
We prove $\ddell A\leq \ddell\Lambda$ first. Recall that $\ddell\Lambda$ is always achieved by simple $\Lambda$-modules of the form $(S,0,0)$, where $S$ is a simple $A$-module, since $(0,Z,0)$ is infinitely deloopable for all $Z\in\mod B$.
}

Suppose $\ddell\!_{\Lambda} (S,0,0)=m<\infty$ using $n$ and the exact sequence
\begin{equation}
\label{eq:exact sequence in lambda}
0\to (C_n,D_n,f_n) \to \cdots \to (C_0, D_0, f_0) \to (S,0,0) \to 0,
\end{equation}
where $(i+1)$-$\dell\!_{\Lambda}(C_i,D_i,f_i)\leq m-i$.

By (1) of Lemma \ref{lem:SES in mod lambda and A}, we get the exact sequence in $\mod A$
\begin{equation}
\label{eq:exact sequence in A}
0\to C_n \to \cdots \to C_0 \to S \to 0.
\end{equation}

By Lemma \ref{lem:dell A vs dell lambda} (1), we know { $(i+1)$-$\dell\!_A C_i \leq (i+1)$-$\dell\!_{\Lambda} (C_i,D_i,f_i) \leq m-i$ for $i=1,\dots, n$.} Therefore, $\ddell\!_A S \leq m$. Repeating the argument for all simple $A$-modules $S$ shows $\ddell A\leq \ddell\Lambda$.

On the other hand, if $\ddell A=m$ using $n$ and the exact sequence
\begin{equation}
0\to C_n \to \cdots \to C_0 \to S \to 0
\end{equation}
where $(i+1)$-$\dell\!_A C_i \leq m-i$ for $i=0,\dots, n$, then we can easily induce the following corresponding exact sequence in $\mod\Lambda$
\begin{equation}
\label{eq:induced exact sequence in lambda}
0\to (C_n,0,0) \to \cdots \to (C_0, 0, 0) \to (S,0,0) \to 0,
\end{equation}
where $(i+1)$-$\dell\!_{\Lambda} (C_i,0,0) \leq { (m+1)}-i$ for $i=1,\dots, n$. { Iterating over all simple $A$-modules $S$, we get $\ddell\Lambda\leq\ddell A +1$.}
\end{proof}

{
\begin{remark}
\label{rmk:prop still true with kddell}
Proposition \ref{prop:ddell A vs ddell lambda} is still true if we replace all $\ddell\!$ with the more general $\kddell\!$ due to the same relation between $\kdell\!_A M$ and $\kdell\!_{\Lambda} (M,N,f)$ in Lemma \ref{lem:dell A vs dell lambda}. We include the simpler statement in the proposition for better readability.
\end{remark}
}

We can now prove the main theorem of this section.

\begin{theorem}
\label{thm:main theorem}
The derived delooping level conjecture holds if and only if $\ddell\Lambda=0$ implies $\ddell\Lambda^{\op}<\infty$ for all Artin algebras $\Lambda$.
\end{theorem}

\begin{proof}
The forward direction is trivial, so we prove the reverse direction.

Let $A$ be an Artin algebra. Construct the algebra $\Lambda^{\op}=\widetilde{A^{\op}}$ as before. We know $\ddell\Lambda^{\op}=0$ by construction, so by assumption, $\ddell\Lambda<\infty$. By Proposition \ref{prop:ddell A vs ddell lambda}, $\ddell A<\infty$.
\end{proof}

Another consequence of Proposition \ref{prop:ddell A vs ddell lambda} is that { we can bound $\Findim\Lambda^{\op}$ above in terms of $\ddell A$.} Note that the inequality $\Findim A^{\op} \leq \Findim\Lambda^{\op}$ in Corollary \ref{cor:ddell stays the same after transformation} appeared first in \cite{fossum2006trivial}.

\begin{corollary}
\label{cor:ddell stays the same after transformation}
Let $A$ be an Artin algebra and $\Lambda^{\op}=\widetilde{A^{\op}}$ be as before. { Then $\Findim A^{\op} \leq \Findim\Lambda^{\op} \leq \ddell A+1$. In particular, if $\ddell A=\Findim A^{\op}$, then $\Findim\Lambda^{\op}\in\{\ddell A, \ddell A + 1\}$.}
\end{corollary}

\begin{proof}
By Lemma \ref{lem:SES in mod lambda and A} (1), if $\Findim A^{\op} = \id M = n$ with the minimal injective resolution
\[
0\to M \to I_0 \to \cdots I_n \to 0,
\]
then we get a corresponding injective resolution of $(M,0,0)$ in $\mod\Lambda$
\[
0\to (M,0,0) \to (I_0,0,0) \to \cdots (I_n,0,0) \to 0,
\]
so $\Findim A^{\op} \leq \Findim\Lambda^{\op}$.

Therefore, the corollary follows from
{
\[
\Findim A^{\op} \leq \Findim\Lambda^{\op} \leq \ddell\Lambda \leq \ddell A + 1.
\]
}
\end{proof}

{ We conclude this section with an example where $\dell\Lambda=\ddell\Lambda \neq \dell A=\ddell A$. As we saw in the proof of Proposition \ref{prop:ddell A vs ddell lambda}, the inequality can only occur if there is a simple $A$-module $S$ with $\Omega^{\dell S} S$ having projective summands. This is especially the case if $\gldim A<\infty$.

\begin{example}
Let $A=\mathbb{K} Q/I$ be the quotient of the path algebra of the quiver $A_n$ ($n\geq 2$) with straight orientation
\[
1\to 2\to \cdots\to n,
\]
where $I$ is generated by all paths of length 2. We know $\gldim A = n-1$, achieved by
\begin{equation}
\label{eq:gldim of A_n}
0\to S_n \to \begin{matrix} n-1 \\ n \end{matrix} \to \cdots \to \begin{matrix} 1 \\ 2 \end{matrix} \to S_1 \to 0.
\end{equation}

By construction, the quiver of $\Lambda=(\widetilde{A^{\op}})^{\op}$ is
\[
\begin{tikzcd}
1 \ar[r] \ar[d] & 2 \ar[r] \ar[d] & \cdots \ar[r] & n, \ar[d] \\
1' \ar[loop right] & 2' \ar[loop right] & \cdots & n' \ar[loop right]
\end{tikzcd}
\]
and $\rad\!^2\Lambda=0$. The simple module with the largest delooping level is $S_1$ by the exact sequence
\begingroup
\setlength\arraycolsep{1pt}
\[
0\to \Omega^n S_1 = \bigoplus_{i=1}^n S_{i'} \to \begin{matrix} n \\ n' \end{matrix} \oplus \left( \bigoplus_{i=1}^{n-1} P_{i'}\right) \to \cdots \to \begin{matrix} 1' \\ 1' \end{matrix} \oplus \begin{matrix} & 2 & \\ 3 & & 2' \end{matrix} \to \begin{matrix} & 1 & \\ 2 & & 1' \end{matrix} \to S_1 \to 0,
\]
\endgroup
where $S_{i'}$ is infinitely deloopable for $i=1,2,\dots,n$. The first $n$ syzygies of $S_1$ are $\Omega^j S_1 = S_{j+1} \oplus \left( \bigoplus_{i=1}^j S_{i'} \right)$ for $0<j<n$. Each $S_{j+1}$ as the summand of a $j$-syzygy $ \Omega^j S_1$ is not $(j+1)$-deloopable. Therefore, $\dell\Lambda=n$.

Similarly, we can show $\ddell\!_{\Lambda} S_1$ is equal to $n$ using the exact sequence \eqref{eq:gldim of A_n}. Indeed, $\kdell\!_{\Lambda} \left( \begin{matrix} i \\ i+1 \end{matrix} \right) = \kdell\!_{\Lambda} S_n=1$ for all $k\in\Z_{>0}$ and $i=1,2,\dots,n-1$, since their first syzygy is the direct sum of some infinitely deloopable simple modules from $S_{1'},\cdots,S_{n'}$. Therefore, we obtain
\[
n-1 = \Findim A^{\op} = \dell A = \ddell A < \Findim\Lambda^{\op} = \dell\Lambda = \ddell\Lambda = n. 
\]
\end{example}
}

\section{Tensor product of algebras}
\label{sec:tensor product of algebras}

Let $\mathbb{K}$ be a field and $\Lambda$ be a finite dimensional $\mathbb{K}$-algebra. In the context of the finitistic dimension conjecture, we will assume $\mathbb{K}$ is algebraically closed since the finitistic dimension is invariant under field extensions \cite{jensen1982homological}. We also assume $\Lambda$ is basic since every finite dimensional algebra over $\mathbb{K}$ is Morita equivalent to a basic finite dimensional algebra over $\mathbb{K}$, and the finitistic dimension is invariant under Morita equivalence. In Construction \ref{con: multi-point extension}, if we choose $A=B$ and $\bar{S}=A$ considered as an $A$-bimodule, the triangular matrix algebra $\begin{pmatrix} A & A \\ 0 & A \end{pmatrix}$ is the tensor product $A\otimes_{\mathbb{K}} \mathbb{K}A_2$, where $A_2$ is \begin{tikzcd} 1 \arrow[r] & 2 \end{tikzcd}, the Dynkin quiver of type $A_2$, and the path algebra $\mathbb{K}A_2$ is isomorphic to the $2\times 2$ upper triangular matrix algebra with coefficients in $\mathbb{K}$. In that case, we know the global dimension of the tensor product behaves additively \cite[Lemma 3.4]{xi2000representation}. That is, $\gldim \left( \begin{pmatrix} A & A \\ 0 & A \end{pmatrix} \right) = \gldim A + \gldim \mathbb{K}A_2 = \gldim A + 1$. In general, we are interested in understanding how the derived delooping level behaves under taking tensor product with other finite dimensional algebras over the base field $\mathbb{K}$. We present the first step in proving such results involving the derived delooping level, and more general cases will be the topic of a future paper.

For the rest of this section, let $\Lambda_1$ and $\Lambda_2$ be basic finite dimensional algebras over the algebraically closed field $\mathbb{K}$. Suppose that $\Findim\Lambda_1^{\op}=m<\infty$ and that $\gldim\Lambda_2=n<\infty$ so that $\dell\Lambda_2$ is also $n$ by Observation \ref{prop:kddell is gldim when finite}. We identify $(\Lambda_1\otimes\Lambda_2)^{\op}$ with $\Lambda_1^{\op}\otimes \Lambda_2^{\op}$. If $S_i$ and $T_j$ are simple modules of $\Lambda_1$ and $\Lambda_2$, respectively, then they are 1-dimensional over $\mathbb{K}$. Their tensor product $S_i\otimes_{\mathbb{K}} T_j$ is also 1-dimensional as a $(\Lambda_1\otimes\Lambda_2)$-module, hence simple. If $P_i$ and $Q_j$ are projective modules of $\Lambda_1$ and $\Lambda_2$, respectively, then $P_i\otimes Q_i$ is a projective $(\Lambda_1\otimes \Lambda_2)$-module because it is the direct sum of direct summands of $\Lambda_1\otimes\Lambda_2$.

\begin{lemma}
\label{lem:tensor product}
Suppose $M\in\mod\Lambda_1$ and $N\in\mod\Lambda_2$. Modules denoted with letters $P$ or $Q$ are projective unless stated otherwise.

\begin{enumerate}
\item Given exact sequences $0\to C_m\to\cdots\to C_1\to C_0\to M\to 0$ in $\mod\Lambda_1$ and $0\to D_n \to\cdots\to D_1\to D_0\to N\to 0$ in $\mod\Lambda_2$, there is an exact sequence in $\mod\Lambda_1\otimes \Lambda_2$
\[
0\to C_m\otimes D_n \to \cdots \to \bigoplus_{i+j=k} C_i\otimes D_j \to \cdots \to C_0\otimes D_0 \to M\otimes N \to 0.
\]

In particular, when the two exact sequences are projective resolutions, this shows
\[
\Findim(\Lambda_1^{\op} \otimes\Lambda_2^{\op}) \geq \Findim\Lambda_1^{\op} + \Findim\Lambda_2^{\op}.
\]

\item If $M$ is a direct summand of $M'$, then $M\otimes N$ is a direct summand of $M'\otimes N$.

\item If $\kdell M=m$ and $Q\in\mod\Lambda_2$ is projective, then $\kdell (M\otimes Q)\leq m$ in $\mod(\Lambda_1\otimes \Lambda_2)$.
\end{enumerate}

\end{lemma}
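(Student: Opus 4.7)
The plan is to prove the three parts in turn, all relying on the fact that $\otimes_{\mathbb{K}}$ is exact (since $\mathbb{K}$ is a field) and that $P\otimes_{\mathbb{K}} Q$ is $(\Lambda_1\otimes\Lambda_2)$-projective whenever $P$ is $\Lambda_1$-projective and $Q$ is $\Lambda_2$-projective, the latter because $P\otimes Q$ is a summand of $\Lambda_1^{a}\otimes\Lambda_2^{b} = (\Lambda_1\otimes\Lambda_2)^{ab}$ once $P\dirsum \Lambda_1^{a}$ and $Q\dirsum \Lambda_2^{b}$.

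For part (1) I would form the bicomplex $(C_i\otimes_{\mathbb{K}} D_j)$ with the usual sign conventions, augmented by $M\otimes N$ in the bottom corner, and take its total complex. Since $\otimes_{\mathbb{K}}$ is exact, tensoring the $C_\bullet$-resolution with any fixed $D_j$ (and symmetrically each column with a fixed $C_i$) preserves exactness, so every row and column of the augmented bicomplex is exact. The standard acyclic-assembly / total-complex lemma then produces the claimed exact sequence in $\mod(\Lambda_1\otimes\Lambda_2)$ with $k$-th term $\bigoplus_{i+j=k} C_i\otimes D_j$ ending in $M\otimes N$. Specializing to projective resolutions (applied to left $\Lambda_i$-modules, i.e.~in $\mod\Lambda_i^{op}$, by symmetry) gives a projective resolution of $M\otimes N$ of length at most $\pd M + \pd N$. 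To upgrade this to the displayed inequality on $\Findim$, I would invoke a K\"unneth-type non-vanishing: if $X,Y$ are chosen so that $\Ext^{\pd M}_{\Lambda_1^{op}}(M,X)\neq 0$ and $\Ext^{\pd N}_{\Lambda_2^{op}}(N,Y)\neq 0$, then $\Ext^{\pd M+\pd N}_{\Lambda_1^{op}\otimes\Lambda_2^{op}}(M\otimes N,\, X\otimes Y)$ contains the tensor product of these two and is nonzero, giving the matching lower bound $\pd(M\otimes N)\geq \pd M+\pd N$, from which $\Findim(\Lambda_1^{op}\otimes\Lambda_2^{op})\geq \Findim\Lambda_1^{op}+\Findim\Lambda_2^{op}$ follows by taking sups.

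Part (2) is immediate from distributivity of $\otimes_{\mathbb{K}}$ over $\oplus$. For part (3), the projectivity observation above combined with exactness of $\otimes_{\mathbb{K}} Q$ means that tensoring a $\Lambda_1$-projective resolution of $M$ with $Q$ yields a $(\Lambda_1\otimes\Lambda_2)$-projective resolution of $M\otimes Q$; in particular $\Omega_{\Lambda_1\otimes\Lambda_2}^{i}(M\otimes Q)\cong \Omega_{\Lambda_1}^{i}(M)\otimes Q$ in $\smod(\Lambda_1\otimes\Lambda_2)$, up to projective summands. Given $\kdell M=m$, the definition supplies $N$ with $\Omega^m M \dirsum \Omega^{m+k} N$ in $\smod\Lambda_1$; tensoring both sides with $Q$ and applying part (2) produces $\Omega^m_{\Lambda_1\otimes\Lambda_2}(M\otimes Q)\dirsum \Omega^{m+k}_{\Lambda_1\otimes\Lambda_2}(N\otimes Q)$, which shows $\kdell(M\otimes Q)\leq m$.

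The main obstacle is the ``in particular'' half of part (1): the total-complex construction naturally produces only the upper bound $\pd(M\otimes N)\leq \pd M+\pd N$, and the stated $\Findim$ inequality goes the other way, genuinely requiring the K\"unneth non-vanishing step above, that is, the ability to pick test modules $X,Y$ realizing the top Ext in both factors simultaneously. This is standard over a field but needs care when the sups defining $\Findim$ are not attained, in which case one argues with sequences of modules approaching the sup from below. The remaining pieces are essentially bookkeeping with exact functors and direct summands.
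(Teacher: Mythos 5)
Your proposal is correct and, for parts (2) and (3), matches the paper's (very terse) proof essentially verbatim. The main value you add is in part (1): the paper's proof of (1) is literally the two words ``By definition,'' and it does not explain why the ``in particular'' inequality on $\Findim$ follows, which, as you rightly observe, requires a \emph{lower} bound $\pd(M\otimes N)\geq \pd M+\pd N$ while the total-complex construction only yields the upper bound. Your K\"unneth non-vanishing argument ($\Ext^{a+b}_{\Lambda_1^{\op}\otimes\Lambda_2^{\op}}(M\otimes N, X\otimes Y)\supseteq \Ext^a_{\Lambda_1^{\op}}(M,X)\otimes \Ext^b_{\Lambda_2^{\op}}(N,Y)$, valid over a field for finitely generated modules) is one valid way to close this gap. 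A slightly more elementary alternative, which the author may have had in mind, is to observe that the tensor of two \emph{minimal} projective resolutions is again minimal, since $\rad(\Lambda_1\otimes\Lambda_2)=\rad\Lambda_1\otimes\Lambda_2 + \Lambda_1\otimes\rad\Lambda_2$ implies $\im(d_i\otimes 1)\subseteq \rad P_{i-1}\otimes Q_j\subseteq \rad(P_{i-1}\otimes Q_j)$ and symmetrically for the other differential; this gives $\pd(M\otimes N)=\pd M+\pd N$ directly without invoking K\"unneth. Your worry about the suprema not being attained is moot in the intended application (Section 4 assumes $\Findim\Lambda_1^{\op}=m<\infty$ and $\gldim\Lambda_2=n<\infty$) and in any case is harmless: a finite supremum of a set of non-negative integers is always attained, and an infinite one makes the inequality trivial. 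For part (3), note that you should keep track of the identification $\Omega^i_{\Lambda_1\otimes\Lambda_2}(M\otimes Q)\cong \Omega^i_{\Lambda_1}(M)\otimes Q$ as genuine modules (not just stably), which is what the paper does by applying $-\otimes Q$ to the truncated projective resolution $0\to\Omega^m M\to P_{m-1}\to\cdots\to P_0\to M\to 0$; the exactness of $-\otimes_{\mathbb{K}}Q$ and projectivity of $P_i\otimes Q$ make this immediate, so your argument is sound.
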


\begin{proof}
\begin{enumerate}
\item By definition.
\item Tensor the split monomorphism $M\to M'$ with the identity map on $N$ to get a split monomorphism $M\otimes N \to M'\otimes N$.
\item Suppose $\Omega^m M\dirsum \Omega^{m+k}M'$ for some $M'$. Applying $-\otimes Q$ to $0\to \Omega^m M \to P_{m-1} \to\cdots \to P_0 \to M\to 0$, we get a truncated projective resolution of $M\otimes Q$ whose $m$-syzygy is $\Omega^m M\otimes Q$. By the previous part, it is a direct summand of the $(m+k)$-syzygy $\Omega^{m+k}M'\otimes Q$.
\end{enumerate}
\end{proof}

\begin{proposition}
\label{prop:ddell of tensor}
Let $S$ and $T$ be simple modules over $\Lambda_1$ and $\Lambda_2$, respectively, and $\pd T = t$. If the $s$-syzygy $\Omega^s S$ can be delooped $t+1$ more times, \textit{i.e.}, $(s+t+1)$-$\dell \Omega^s S = 0$, then $\ddell(S\otimes T)\leq s+t$.
\end{proposition}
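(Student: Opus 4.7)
The plan is to construct, using Lemma \ref{lem:tensor product}(1), an explicit exact sequence of length $m+n$ ending at $S\otimes T$, and to verify that it witnesses $\ddell(S\otimes T)\leq m+n$ in the sense of Definition \ref{def: dell, kdell, ddell}(3).

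Concretely, I would take a projective resolution $0\to P_n\to\cdots\to P_0\to T\to 0$ of $T$ in $\mod\Lambda_2$ (of length exactly $n$ since $\pd T=n$) together with the truncated resolution $0\to \Omega^m S\to Q_{m-1}\to\cdots\to Q_0\to S\to 0$ of $S$ in $\mod\Lambda_1$, and combine them via Lemma \ref{lem:tensor product}(1) to obtain an exact sequence in $\mod\Lambda_1\otimes\Lambda_2$
\[
0\to \Omega^m S\otimes P_n\to C_{m+n-1}\to\cdots\to C_0\to S\otimes T\to 0,
\]
where $C_k=\bigoplus_{i+j=k}C_i^{(S)}\otimes P_j$ with $C_i^{(S)}=Q_i$ for $i<m$ and $C_m^{(S)}=\Omega^m S$. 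This is the candidate witness.

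The task is then to verify $(k+1)$-$\dell C_k\leq m+n-k$ for each $0\leq k\leq m+n$. When $k<m$, every summand of $C_k$ is a tensor product of two projective modules and hence is projective in $\mod\Lambda_1\otimes\Lambda_2$, so the bound is trivial. When $m\leq k\leq m+n$, the only non-projective summand of $C_k$ is $\Omega^m S\otimes P_{k-m}$; since any positive syzygy kills projective summands, and since the top term $k=m+n$ has no projective summand at all, the problem reduces to proving $(k+1)$-$\dell(\Omega^m S\otimes P_{k-m})\leq m+n-k$. For this, I would use the hypothesis $\Omega^m S\dirsum\Omega^{m+n+1}N'$ in $\mod\Lambda_1$ for some $N'$: tensoring with the projective (hence flat) $\Lambda_2$-module $P_{k-m}$ and invoking the commutation $\Omega^\ell(M\otimes Q)=\Omega^\ell(M)\otimes Q$ modulo projective summands already exploited in the proof of Lemma \ref{lem:tensor product}(3), then applying $\Omega^{m+n-k}$, one reaches
\[
\Omega^{m+n-k}(\Omega^m S\otimes P_{k-m})\dirsum \Omega^{m+n+1}M''
\]
for some $M''$. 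Rewriting $m+n+1=(m+n-k)+(k+1)$ matches this to the required delooping condition exactly.

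The main obstacle will be the bookkeeping of indices and direct summands: one must carefully separate the projective from the non-projective contributions to each $C_k$, justify that tensoring with $P_{k-m}$ commutes with syzygies modulo projective summands, and verify that the exponent $(m+n-k)+(k+1)=m+n+1$ matches the one supplied by the hypothesis. None of this is deep, but the care in indexing is where all the content of the argument lives.
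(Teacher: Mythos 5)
Your proposal takes essentially the same route as the paper's proof: tensor the truncated resolution of $S$ with the projective resolution of $T$ using Lemma \ref{lem:tensor product}(1), observe that the only non-projective summands occurring are $\Omega^m S\otimes P_j$ at positions $m,\dots,m+n$, and verify the required delooping bounds from the hypothesis via the commutation of $\Omega$ with $-\otimes Q$ modulo projectives (the content of Lemma \ref{lem:tensor product}(3)). The only cosmetic difference is that you reach $(k+1)$-$\dell(\Omega^m S\otimes P_{k-m})\leq m+n-k$ by a superfluous extra application of $\Omega^{m+n-k}$, whereas the paper notes directly that $k+1\leq m+n+1$ already forces $(k+1)$-$\dell(\Omega^m S\otimes P_{k-m})=0$; both are correct.
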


\begin{proof}
We have exact sequences
\[
0\to \Omega^s S \to P_{s-1} \to \cdots \to P_1 \to P_0 \to S \to 0,
\]
\[
0\to \Omega^t T=Q_t \to Q_{t-1} \to \cdots \to Q_1 \to Q_0 \to T \to 0,
\]
where all $P's$ and $Q's$ are projective.

By taking their tensor product as shown in Lemma \ref{lem:tensor product} (1), we get an exact sequence
\begin{equation}
\label{eq: tensor projective resolution}
{ 0\to \Omega^s S\otimes Q_t \to (P_{s-1}\otimes Q_t) \oplus (\Omega^s S \otimes Q_{t-1}) \to \cdots \to Q_0\otimes P_0 \to S\otimes T\to 0},
\end{equation}
where the first non-projective term counting from the right is $\Omega^s S \otimes Q_0$ { if $\Omega^s S$ is not projective. If $\Omega^s S$ is projective, then every term in \eqref{eq: tensor projective resolution} is projective, and the result follows.}

So assume $\Omega^s S$ is not projective. All non-projective summands of the terms in \eqref{eq: tensor projective resolution} are at positions $s+t$ to $s$ ($Q_0\otimes P_0$ is at position 0), and they are $\Omega^s S\otimes Q_t$, $\Omega^s S\otimes Q_{t-1}$, $\dots$, $\Omega^s S\otimes Q_0$. Since $(s+t+1)$-$\dell \Omega^s S = 0$, we have $\kdell \Omega^s S = 0$ for $k=s+1,\dots, s+t+1$. By Lemma \ref{lem:tensor product} (3), { $k$-$\dell (\Omega^s S\otimes Q) = 0$ for any projective $\Lambda_2$-module $Q$} for $k=s+1,\dots, s+t+1$. Therefore, we get
\begin{itemize}
\item $(s+t+1)$-$\dell (\Omega^s S\otimes Q_t) = 0 \leq { s+t-(s+t)=0,}$
\item $(s+t)$-$\dell (\Omega^s S\otimes Q_{t-1}) = 0 \leq { s+t-(s+t-1)=1,}$

\hspace{-0.8cm}\vdots
\item $(s+1)$-$\dell (\Omega^s S\otimes Q_0)= 0\leq { s+t-s=t.}$
\end{itemize}

The terms at other positions of the exact sequence are all projective, so we naturally have their $k$-delooping level zero for any $k$. Therefore, by the definition of $\ddell\!$, we get $\ddell (S\otimes T) \leq s+t$.
\end{proof}

\begin{corollary}
If $\gldim\Lambda_2=n<\infty$ and $\Findim\Lambda_1^{\op} = \kdell\Lambda_1 = m$ for any $k\geq n+1$ , then
\[
\ddell\Lambda_1 + \ddell\Lambda_2 = \ddell(\Lambda_1\otimes \Lambda_2) = \Findim(\Lambda_1\otimes \Lambda_2)^{\op}.
\]
\end{corollary}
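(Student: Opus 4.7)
The plan is to sandwich the three quantities and show each equals $m+n$. First, using the preliminaries, I would pin down the two summands on the left: Theorem \ref{thm:dell, ddell bounds Findim} collapses the chain $m = \Findim\Lambda_1^{\op} \leq \ddell\Lambda_1 \leq \kdell\Lambda_1 = m$, so $\ddell\Lambda_1 = m$, while Proposition \ref{prop:kddell is gldim when finite} applied to $\Lambda_2$ (whose global dimension is $n < \infty$) gives $\ddell\Lambda_2 = n$ and also $\Findim\Lambda_2^{\op} = n$. Hence $\ddell\Lambda_1 + \ddell\Lambda_2 = m+n$.

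For the upper bound $\ddell(\Lambda_1 \otimes \Lambda_2) \leq m+n$, I would argue simple by simple. Let $S$ be a simple $\Lambda_1$-module and $T$ a simple $\Lambda_2$-module with $n' := \pd T \leq n$. The hypothesis gives $(n+1)$-$\dell S \leq m$, i.e., $\Omega^m S \dirsum \Omega^{m+n+1} N$ for some $N$; rewriting the right-hand side as $\Omega^{m+n'+1}(\Omega^{n-n'} N)$ yields $(m+n'+1)$-$\dell \Omega^m S = 0$, which is precisely the hypothesis of Proposition \ref{prop:ddell of tensor} with $n$ replaced by $n'$. It follows that $\ddell(S \otimes T) \leq m + n' \leq m + n$, and taking the supremum over simple summands of $\Lambda_1 \otimes \Lambda_2$ gives the bound.

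For the matching lower bound, I would invoke Lemma \ref{lem:tensor product}(1) to produce $\Findim(\Lambda_1^{\op} \otimes \Lambda_2^{\op}) \geq \Findim\Lambda_1^{\op} + \Findim\Lambda_2^{\op} = m+n$, identify $(\Lambda_1 \otimes \Lambda_2)^{\op}$ with $\Lambda_1^{\op} \otimes \Lambda_2^{\op}$ as stipulated in the section's preamble, and then apply Theorem \ref{thm:dell, ddell bounds Findim} to conclude $\ddell(\Lambda_1 \otimes \Lambda_2) \geq \Findim(\Lambda_1 \otimes \Lambda_2)^{\op} \geq m+n$. Chaining with the upper bound forces all three quantities to coincide with $m+n$, yielding the claimed triple equality.

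The main obstacle I anticipate is really just a bookkeeping mismatch: Proposition \ref{prop:ddell of tensor} is stated with $\pd T = n$, but $\pd T$ can be strictly smaller than $\gldim\Lambda_2$ across simple $\Lambda_2$-modules. The descent from $(m+n+1)$-$\dell \Omega^m S = 0$ to $(m+n'+1)$-$\dell \Omega^m S = 0$ sketched above handles this cleanly, relying only on the factorization $\Omega^{m+n+1} = \Omega^{m+n'+1} \circ \Omega^{n-n'}$ in the stable category; nothing deeper is needed.
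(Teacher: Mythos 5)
Your proof is correct and follows the same sandwich strategy as the paper (bounding all three quantities between the same upper and lower estimate of $m+n$), but you go further in one important respect. The paper's proof justifies $\ddell(\Lambda_1\otimes\Lambda_2)\leq m+n$ with the terse remark that Proposition \ref{prop:ddell of tensor} ``works for any pair of simple modules $S$ and $T$,'' leaving implicit the fact that Proposition \ref{prop:ddell of tensor} is stated under the assumption $\pd T = n$ exactly, while a simple $\Lambda_2$-module may well have $\pd T = n' < n$. Your observation that $\Omega^m S \dirsum \Omega^{m+n+1}N = \Omega^{m+n'+1}(\Omega^{n-n'}N)$ cleanly converts the hypothesis $(m+n+1)$-$\dell\Omega^m S = 0$ into $(m+n'+1)$-$\dell\Omega^m S = 0$, which is what Proposition \ref{prop:ddell of tensor} actually needs when re-run with $n$ replaced by $n'$, yielding $\ddell(S\otimes T)\leq m+n'\leq m+n$. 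This is exactly the missing bookkeeping, and it is a genuine improvement in rigor over the published argument. The rest of the proposal (computing $\ddell\Lambda_1 = m$ from the squeeze $\Findim\Lambda_1^{\op}\leq\ddell\Lambda_1\leq\kdell\Lambda_1$, computing $\ddell\Lambda_2 = \Findim\Lambda_2^{\op} = n$ from Proposition \ref{prop:kddell is gldim when finite}, and the lower bound via Lemma \ref{lem:tensor product}(1) plus Theorem \ref{thm:dell, ddell bounds Findim}) matches the paper's chain of inequalities exactly, just spelled out.
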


\begin{proof}
The corollary follows from
\begin{align*}
m+n & \geq \ddell(\Lambda_1\otimes \Lambda_2) \geq \Findim(\Lambda_1\otimes\Lambda_2)^{\op} = \Findim(\Lambda_1^{\op} \otimes\Lambda_2^{\op}) \geq \Findim\Lambda_1^{\op} + \Findim\Lambda_2^{\op} \\
& = \ddell\Lambda_1 + \ddell\Lambda_2 = m+n,
\end{align*}
where in particular the first inequality $\ddell(\Lambda_1\otimes \Lambda_2)\leq m+n$ is a consequence of Proposition \ref{prop:ddell of tensor} since the argument works for any pair of simple modules $S$ and $T$.
\end{proof}

\begin{corollary}
If $\Findim\Lambda_1^{\op} = \kdell\Lambda_1 = m < \infty$ for all $k\in\Z_{>0}$, then
\[
\ddell\Lambda_1 + \ddell\Lambda_2 = \ddell(\Lambda_1\otimes \Lambda_2) = \Findim(\Lambda_1\otimes \Lambda_2)^{\op}.
\]
for any $\Lambda_2$ with finite global dimension.
\end{corollary}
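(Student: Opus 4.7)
The plan is to reduce this corollary to a direct application of the previous one, exploiting the fact that the new hypothesis is uniform in $k$ while the previous one fixed a single $k$ depending on $\gldim\Lambda_2$. First, I would fix an arbitrary $\Lambda_2$ with $n := \gldim\Lambda_2 < \infty$; Proposition \ref{prop:kddell is gldim when finite} then immediately gives $\ddell\Lambda_2 = n$, placing $\Lambda_2$ into the setting required by the preceding corollary.

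Next, I would verify the hypothesis needed to invoke that corollary, namely $\Findim\Lambda_1^{\op} = \kdell\Lambda_1 = m$ for some (indeed any) $k \geq n+1$. Because the present hypothesis asserts this equality for every $k \in \Z_{>0}$, in particular it holds at $k = n+1$, regardless of what $n$ turns out to be; this is precisely where the uniform-in-$k$ assumption pays off, as it is exactly what allows $\Lambda_2$ to range freely over \emph{all} algebras of finite global dimension rather than being pinned to a specific $n$. Applying the previous corollary then yields
\[
\ddell\Lambda_1 + \ddell\Lambda_2 = \ddell(\Lambda_1\otimes\Lambda_2) = \Findim(\Lambda_1\otimes\Lambda_2)^{\op}.
\]

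I do not expect any substantive obstacle: this corollary is a uniform-in-$\Lambda_2$ repackaging of its predecessor, and the real work has already been carried out in Proposition \ref{prop:ddell of tensor} and Lemma \ref{lem:tensor product}. The only minor point worth checking is that the ``for all $k \in \Z_{>0}$'' assumption genuinely implies the ``for some $k \geq n+1$'' version demanded earlier, but this is immediate since a larger $k$ only makes the defining direct-summand condition for $\kdell$ more restrictive, so the hypothesis is a priori stronger and specializes trivially.
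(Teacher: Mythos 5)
Your proof is correct and is precisely the reduction the paper has in mind: the paper offers no separate proof for this corollary because it is immediate from the preceding one once you observe that, since the hypothesis holds for every $k$, it holds in particular for some (indeed all) $k \geq n+1$ where $n = \gldim\Lambda_2$, and Proposition \ref{prop:kddell is gldim when finite} gives $\ddell\Lambda_2 = n$. Your final remark about the direct-summand condition being more restrictive for larger $k$ is a fine sanity check, though the only thing actually needed is the trivial specialization from ``all $k \in \Z_{>0}$'' to the specific $k$ required by the preceding corollary.
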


\section{Future Directions}

We hope the new definition of derived delooping level may rekindle more interest in studying the finitistic dimensions. As we saw in Proposition \ref{prop:ddell A vs ddell lambda}, the derived delooping level does not change under the construction of $\Lambda$ from $A$. It is important to consider under what other constructions and operations does the derived delooping level stay unchanged. We formulate some future questions below.

\begin{question}
\begin{enumerate}
\item Can we extend Proposition \ref{prop:ddell A vs ddell lambda} to other constructions of $\Lambda$ from $A$ that are related to triangular matrix algebras?
\item Can we loosen the condition of $\Lambda_2$ in Section \ref{sec:tensor product of algebras} to be more general algebras, such as algebras $\Lambda$ whose $\Findim\Lambda^{\op}$ is $\dell\Lambda$ or $\ddell\Lambda$?
\item Can we prove more general versions of Proposition \ref{prop:ddell of tensor} where the conditions on $S$ and $T$ are weaker? For example, what if $T$ is infinitely deloopable?
\end{enumerate}
\end{question}

\section{Declarations}

\textbf{Ethical Approval:} Not applicable

\textbf{Funding:} Not applicable

\textbf{Availability of Data and Materials:} Not applicable

\bibliographystyle{plain}
\bibliography{refs_v3}

\end{document}